\newcommand{\E}[1]{\mathbb{E}\left\{ #1\right\}}
\newcommand{\pk}[1]{\mathbb{P} \left\{ #1 \right \} }
\newcommand{\R}{\mathbb{R}}
\newcommand{\Z}{\mathbb{Z}}
\newcommand{\ep}{\varepsilon}
\DeclareMathOperator*{\cov}{\mathbb{C}ov}
\DeclareMathOperator*{\var}{\mathbb{V}ar}
\newcommand{\convd}{\overset{d}{\to}}
\newcommand{\ind}{\mathbb{I}}
\newcommand{\pad }{\mathcal P_{\alpha}^\delta(a)}
\def\IF{\infty}
\def\bqny#1{\begin{eqnarray*} #1 \end{eqnarray*}}
\def\bqn#1{\begin{eqnarray} #1 \end{eqnarray}}
\newtheorem{theorem}{Theorem}[section]
\newtheorem{lemma}[theorem]{Lemma}
\begin{document}

\title{On the speed of convergence of Piterbarg constants}

\author{Krzysztof Bisewski}
\address{Krzysztof Bisewski, Department of Actuarial Science, University of Lausanne,	UNIL-Dorigny, 1015 Lausanne, Switzerland}
\email{Krzysztof.Bisewski@unil.ch}

\author{Grigori Jasnovidov}
\address{Grigori Jasnovidov, Laboratory of Statistical Methods, St. Petersburg Department of Steklov Mathematical Institute of Russian Academy of Sciences,
Department of Actuarial Science, University of Lausanne,	
UNIL-Dorigny, 1015 Lausanne, Switzerland
}
\email{griga1995@yandex.ru}

\bigskip

\date{\today}
 \maketitle

 {\bf Abstract:} In this paper we derive an upper bound for the difference between
the continuous and discrete Piterbarg constants. Our result allows
us to approximate the classical Piterbarg constants by their discrete
counterparts using Monte Carlo simulations with an explicit error rate.\\

 {\bf Key Words:} fractional Brownian motion; Piterbarg constants; discretization error; Monte Carlo simulation;
\\
\\
 {\bf AMS Classification:} 60G15; 60G70; 65C05

\section{Introduction}
Let $B_\alpha(t), t\in\R$ be a fractional Brownian motion (later on fBM), i.e., a
Gaussian process with zero expectation and covariance function given by
\bqny{
\cov(B_\alpha(t),B_\alpha(s)) = \frac{|t|^{\alpha}+|s|^{\alpha}-|t-s|^{\alpha}}{2},
 \ \ \ \ \ t,s \in\R, \ \alpha\in (0,2).
}
One of the constants that typically appear in the asymptotics of
the ruin probabilities for Gaussian processes are the Piterbarg constants
defined for $d>0$ and $\alpha\in (0,2)$ by
\bqny{
\mathcal P_{\alpha}(d,\mathcal K) = \E{\sup\limits_{t\in \R
\cap \mathcal K}
e^{\sqrt 2 B_\alpha(t)-(1+d)|t|^\alpha}},
}

where $\mathcal K$ can be both $[0,\IF)$ and $(-\IF,\IF)$.
We refer to Theorem 10.1 in \cite{20lectures} for finiteness and
positivity of $\mathcal P_{\alpha}(d,\mathcal K)$; some other
contributions dealing with Piterbarg constants are
\cite{longKrzys,ling2019generalized,
ApproximationSupremumMaxstableProcess,secondproj,Debicki_2019pickands-piterbarg}.
We notice, that the Piterbarg constants appear
in Gaussian queueing theory results, see \cite{MR3091101,
grisha_kdebicki_gaussian_reflected}.
As for well-known Pickands constants (see, e.g., \cite{20lectures}),
the exact value of the classical Piterbarg
constant is known only for special case $\alpha=1$, namely
$\mathcal P_{1}(d,(-\IF,\IF)) = 1+\frac{2}{d}-\frac{1}{2d+1}$
and $\mathcal P_{1}(d,[0,\IF)) = 1+\frac{1}{d}$, see \cite{longKrzys} and
\cite{thirdprojectParisian}, respectively.
In other cases naturally arises the question of an approximation of
the classical Piterbarg constants. \newline

Since it seems difficult to simulate fBM
on a continuous-time scale, one can approximate the
 Piterbarg constants by their discrete
analogous defined for $\delta> 0$ by
\bqny{
\mathcal P_{\alpha}^\delta(d,\mathcal K) =
\E{\sup\limits_{t\in G(\delta) \cap \mathcal K}
e^{\sqrt 2 B_\alpha(t)-(1+d)|t|^\alpha}},
}
where $G(\delta) = \delta\Z$ for $\delta>0$. Set in the following
$\mathcal P_{\alpha}^0(d,\mathcal K) = \mathcal
P_{\alpha}(d,\mathcal K)$ and $G(0) = \R$.

The question of speed of convergence of the discrete Piterbarg constants
to continuous ones is related to the estimation of
$\sup_{t\in [0,1]}B_\alpha(t)-\sup_{t\in [0,1]_\delta}B_\alpha(t)$
as $\delta \to 0$.
We refer to \cite{Borovkov18_H<1/2,Borovkov17_H_general} for
the interesting analysis of the expression above. For Brownian motion
case (later on BM), i.e.,
when $\alpha=1$, we refer to \cite{Dieker_survey_BM_sup} for the survey of
the known results for the current moment.

\section{Main Results}

Here we present the results needed for approximation
of $\mathcal P_{\alpha}(d,\mathcal K)$ by simulation
methods. The theorem below derives an upper bound for the difference
between the continuous and discrete Piterbarg constants.
\begin{theorem}\label{piterbarg_const_theo}
For any $d>0$ with some constant $\mathcal C>0$ that does not depend on
$\delta$ it holds, that
as $\delta\to0$
\bqny{
\mathcal P_{\alpha}(d,\mathcal K) - \mathcal P_{\alpha}^\delta(d,\mathcal K)
 \le \mathcal C\delta^{\alpha/2}(-\ln\delta)^{1/2}.
}
\end{theorem}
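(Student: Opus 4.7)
Write $\eta(t) := e^{\sqrt 2 B_\alpha(t) - (1+d)|t|^\alpha}$, $M := \sup_{t \in \mathcal K} \eta(t)$, and $M_\delta := \sup_{t \in G(\delta) \cap \mathcal K} \eta(t)$, so we must bound $\mathbb{E}[M - M_\delta]$. I plan to truncate at threshold $T = T(\delta) \asymp (\log(1/\delta))^{1/\alpha}$: writing $M^B, M^B_\delta$ for the continuous and grid suprema over $[-T,T] \cap \mathcal K$ and $M^T$ for the continuous supremum over the complement, the elementary bound $\max(a,b) - c \le (a-c)_+ + b$ for $a,b,c \ge 0$ yields
$$\mathbb{E}[M - M_\delta] \le \mathbb{E}[M^B - M^B_\delta] + \mathbb{E}[M^T].$$
The tail is dispatched by a dyadic decomposition of $\{|t| > T\}$ into pieces $\{|t| \in [2^kT, 2^{k+1}T]\}$; on each piece the drift $-(1+d)|t|^\alpha$ dominates $\sqrt 2 B_\alpha(t)$, and a Borell-TIS estimate on each followed by summation in $k$ gives $\mathbb{E}[M^T] \le Ce^{-cT^\alpha} \le \delta^{\alpha/2}$ for a suitable constant hidden in $T$.

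For the bulk, on each $I_k := [k\delta, (k+1)\delta] \subset [-T,T]$ and $t \in I_k$ the identity $\eta(t) = \eta(k\delta) e^{Z_k(t)}$ with $Z_k(t) := \sqrt 2(B_\alpha(t) - B_\alpha(k\delta)) - (1+d)(|t|^\alpha - |k\delta|^\alpha)$, combined with $e^x - 1 \le x e^x$ for $x \ge 0$, gives $\eta(t) - \eta(k\delta) \le \eta(k\delta)\, Z_k(t)_+ e^{Z_k(t)_+}$. Taking suprema in $k$ and in $t \in I_k$, using that $x \mapsto xe^x$ is nondecreasing on $[0,\infty)$ and that $\eta(k\delta) \le M^B$, one obtains
$$M^B - M^B_\delta \le M^B \cdot V e^V, \qquad V := \sup_{k:\,I_k\subset[-T,T]} \sup_{t \in I_k} Z_k(t)_+.$$
H\"older's inequality with exponent $p \in (1,1+d)$ and conjugate $q = p/(p-1)$ then gives $\mathbb{E}[M^B - M^B_\delta] \le \|M^B\|_p \|Ve^V\|_q$.

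The factor $\|M^B\|_p$ is controlled uniformly in $\delta$ via self-similarity: the rescaling $t \mapsto p^{-2/\alpha} s$ and $B_\alpha(p^{-2/\alpha}s) \eqd p^{-1} B_\alpha(s)$ shows $\mathbb{E}[M^p] = \mathcal P_\alpha(p^{-1}(1+d)-1,\mathcal K)$, which is finite whenever $p < 1+d$ by the finiteness statement cited in the introduction, so $\|M^B\|_p \le \|M\|_p \le C$. For $\|Ve^V\|_q$: the drift contribution to $Z_k(t)$ is $O(\delta^\alpha)$ when $\alpha \le 1$ (by subadditivity of $x \mapsto x^\alpha$) and $O(T^{\alpha-1}\delta)$ when $\alpha > 1$, both $o(\delta^{\alpha/2}\sqrt{\log(1/\delta)})$ for the chosen $T$. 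Self-similarity and stationary increments of $B_\alpha$ give $\sup_{t \in I_k}(B_\alpha(t) - B_\alpha(k\delta)) \eqd \delta^{\alpha/2}\sup_{s\in[0,1]}B_\alpha(s)$, which has a Gaussian right tail; a union bound over the $N \asymp T/\delta$ intervals combined with per-interval Borell-TIS shows $V \le C\delta^{\alpha/2}\sqrt{\log(1/\delta)}$ with probability $\ge 1 - \delta^K$ for any $K$, with Gaussian tails beyond. By Cauchy-Schwarz $\|Ve^V\|_q \le \|V\|_{2q}\|e^V\|_{2q}$, where $\|V\|_{2q} = O(\delta^{\alpha/2}\sqrt{\log(1/\delta)})$ and $\|e^V\|_{2q} = 1 + o(1)$ (since $V \to 0$ in the relevant norm), one concludes $\|Ve^V\|_q \le C'\delta^{\alpha/2}\sqrt{\log(1/\delta)}$; combining with the tail estimate yields the theorem.

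The main technical obstacle I foresee is a clean execution of the last step, in particular the union-bound argument producing the estimate on $V$: for $\alpha \ne 1$ the $N \asymp T/\delta$ interval suprema $\sup_{t \in I_k}(B_\alpha(t) - B_\alpha(k\delta))_+$ are \emph{dependent}, which rules out the independence-based short proof available for Brownian motion and forces one to union-bound per-interval Borell-TIS estimates while carefully tracking the drift perturbation and the factor $e^V$, so that the only logarithmic loss in the final rate is the $\sqrt{\log(1/\delta)}$ factor that appears in the theorem.
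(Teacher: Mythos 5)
Your plan follows the same broad strategy as the paper's proof: truncate to a logarithmically growing window $T(\delta)$ (the paper takes $T_\delta=(-\ln\delta)^{2/\alpha}$, so that the tail from Theorem~\ref{lemma_piterbarg_truncation} is $o(\delta^{\alpha/2})$), linearize the exponential via $e^x-e^y\le(x-y)e^x$, apply H\"older to separate a bounded-moment factor from a modulus-of-continuity factor, and bound the latter by $\delta^{\alpha/2}\sqrt{\log(1/\delta)}$. Two execution-level differences are worth noting. First, the paper treats the modulus term $\sup_{|t-s|\le\delta}|B_\alpha(t)-B_\alpha(s)|$ in one stroke via the cited modulus-of-continuity bound (Lemma~\ref{new_lemma}, which invokes Theorem~4.2 of the Minicourse reference), whereas you propose an explicit union bound over the $O(T/\delta)$ grid cells together with per-cell Borell--TIS estimates; this is more work but is self-contained, and you correctly observe that a union bound does not require independence of the cell increments for $\alpha\ne1$. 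Second, the paper controls the bounded-moment factor $\E{e^{q\max_{w\ge0}Z_\alpha(w)}}$ via the Piterbarg-type tail asymptotics of Lemma~\ref{lemma_tail_behaviour_pick_const}, while you use self-similarity to identify $\E{M^p}$ with $\mathcal P_\alpha\big(p^{-1}(1+d)-1,\mathcal K\big)$, finite precisely for $p<1+d$ --- a clean alternative. Your factorization $\eta(t)=\eta(k\delta)e^{Z_k(t)}$ leading to the bound $M^B\cdot Ve^V$ is equivalent to the paper's direct $e^{\max Z_\alpha}\,|Z_\alpha(t)-Z_\alpha(s)|$ bound after unfolding, and the extra $e^V$ factor you pick up is harmless since $\|e^V\|_{2q}\to1$. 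Both routes yield the stated rate; I see no gap.
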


Since it is impossible to simulate a fBM on the infinite-time horizon
we deal with the truncated version of the Piterbarg constants. Namely,
 for $d>0$ and $\delta,T\ge 0$ a truncated Piterbarg constant
  is defined by
\bqny{
\mathcal P_{\alpha}^\delta(d,\mathcal K;T) =
\E{\sup\limits_{t\in \mathcal K\cap [-T,T]_\delta} e^{
\sqrt 2 B_\alpha(t)-(1+d)|t|^\alpha}},
}
where for real $a,b$ and $\eta> 0$
$[a,b]_\eta = [a,b]\cap \eta \mathbb Z$. Set in the following
 $[a,b]_0 := [a,b]$ for $a<b$.

The following theorem provide us an upper bound for the
difference between a Piterbarg constant and the corresponding
truncated Piterbarg constant:
\begin{theorem}\label{lemma_piterbarg_truncation}
It holds with some uniform in $\delta\ge 0$ constant $\mathcal C$ and
all sufficiently large $T$, that
\bqny{
\mathcal P_{\alpha}^\delta(d,\mathcal K) - \mathcal P_{\alpha}^\delta(d,\mathcal K;T)
\le e^{-\mathcal CT^\alpha}.
}
\end{theorem}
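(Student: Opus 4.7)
The plan is to control the truncation error by the expected supremum of $e^{X(t)}$ with $X(t):=\sqrt{2}\,B_\alpha(t)-(1+d)|t|^\alpha$ over the tail $\{|t|>T\}$, decompose that tail into unit intervals, and apply Borell--TIS on each piece so that the Gaussian variance contribution is strictly dominated by the drift $-(1+d)|t|^\alpha$.

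Since $e^X\ge 0$, the identity $\sup_A e^X=\max(\sup_B e^X,\sup_{A\setminus B}e^X)$ for $B\subset A$ yields
\[
\mathcal P_\alpha^\delta(d,\mathcal K) - \mathcal P_\alpha^\delta(d,\mathcal K;T) \le \E{\sup_{t\in G(\delta)\cap\mathcal K,\,|t|>T} e^{X(t)}} \le \E{\sup_{t\in \mathcal K,\,|t|>T} e^{X(t)}},
\]
and the last expression is independent of $\delta$, so a bound $e^{-\mathcal C T^\alpha}$ on it will be uniform in $\delta\ge 0$. By the symmetry of $B_\alpha$ I may restrict to $t>T$.

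Decomposing $(T,\infty)=\bigcup_{k\ge 0} I_k$ with $I_k=[T+k,T+k+1]$ and using $|t|^\alpha\ge(T+k)^\alpha$ on $I_k$, subadditivity gives
\[
\E{\sup_{t>T} e^{X(t)}} \le \sum_{k\ge 0} e^{-(1+d)(T+k)^\alpha}\,\E{e^{M_k}},\qquad M_k:=\sup_{t\in I_k}\sqrt{2}\,B_\alpha(t).
\]
Writing $\sqrt{2}B_\alpha(T+k+s) = \sqrt{2}B_\alpha(T+k) + \sqrt{2}(B_\alpha(T+k+s)-B_\alpha(T+k))$ and invoking stationarity of the fBM increments, the constant $\E M_k = \sqrt{2}\,\E\sup_{s\in[0,1]}B_\alpha(s) =: C_1$ is finite and independent of $k$. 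Borell--TIS shows $M_k-\E M_k$ is sub-Gaussian with parameter $\sigma_k^2 := 2\sup_{t\in I_k}t^\alpha = 2(T+k+1)^\alpha$, hence $\E{e^{M_k}}\le \exp(C_1+(T+k+1)^\alpha)$. The $k$-th summand is then at most $\exp(-(1+d)(T+k)^\alpha + (T+k+1)^\alpha + C_1)$, and since $(T+k+1)^\alpha = (T+k)^\alpha(1+o(1))$ uniformly in $k\ge 0$ as $T\to\infty$, for $T$ sufficiently large the exponent does not exceed $-(d/2)(T+k)^\alpha$.

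It remains to sum $\sum_{k\ge 0}e^{-(d/2)(T+k)^\alpha}$. For $\alpha\ge 1$ convexity gives $(T+k)^\alpha\ge T^\alpha+\alpha T^{\alpha-1}k$, so the series is a geometric tail bounded by $Ce^{-(d/2)T^\alpha}$. For $\alpha<1$, split at $k=T$ and use $(T+k)^\alpha\ge T^\alpha$ on $k\le T$ together with $(T+k)^\alpha\ge k^\alpha$ on $k>T$; the second part is dominated by $\int_T^\infty e^{-(d/2)x^\alpha}dx=O(T^{1-\alpha}e^{-(d/2)T^\alpha})$. In both cases the polynomial prefactor is absorbed by replacing $d/2$ with any $\mathcal C<d/2$, producing the desired bound $e^{-\mathcal C T^\alpha}$ for all sufficiently large $T$. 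The main obstacle is the third step: one must verify that the Borell variance correction $\sigma_k^2/2 = (T+k+1)^\alpha$ is \emph{strictly} dominated by the drift term $(1+d)(T+k)^\alpha$ for large $T+k$, which is precisely where the strict positivity $d>0$ is used; the other steps are routine tail and geometric-series estimates.
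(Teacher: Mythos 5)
Your proof is correct and takes a genuinely different route from the paper. The paper bounds the truncation error by $\E{\sup_{t\in\R}e^{Z_\alpha(t)}\ind(M_T)}$, where $M_T$ is the event that the process exceeds $0$ somewhere outside $[-T,T]$; it then estimates $\pk{M_T}\le e^{-CT^\alpha}$ via Borell--TIS and combines this with the finite moment of order $1+d/2$ for $\sup_{t\in\R}e^{Z_\alpha(t)}$ (Lemma 3.1, itself a consequence of Piterbarg's Theorem 10.1) through H\"older's inequality. In contrast, you never invoke the tail asymptotics of the all-time supremum: you decompose the tail $\{|t|>T\}$ into unit intervals, factor out the drift $-(1+d)|t|^\alpha$ on each piece, and bound the exponential moment $\E{e^{M_k}}$ directly by the strong (mgf) form of Borell's concentration inequality, then sum a geometric-type series. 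Your approach is more elementary and self-contained (no reliance on the global tail behaviour of $\sup e^{Z_\alpha}$) and yields a transparent constant $\mathcal C<d/2$; the paper's is shorter on the page at the cost of outsourcing the hard work to Lemma 3.1. One small caveat: your phrase ``Borell--TIS shows $M_k-\E M_k$ is sub-Gaussian with parameter $\sigma_k^2$'' should be understood in the mgf sense, $\E{e^{\lambda(M_k-\E M_k)}}\le e^{\lambda^2\sigma_k^2/2}$, which holds by Gaussian concentration of measure; if one only uses the tail-bound form of Borell--TIS, the integration introduces an extra polynomial factor $O(\sigma_k)$ in $\E{e^{M_k}}$, but this is harmless and is absorbed in the final bound exactly as your polynomial prefactors are.
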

The results above imply the following approach for approximation of
$\mathcal P_{\alpha}(d,\mathcal K)$ for $d>0$:

\begin{itemize}
\item[1)] For small $\delta>0$ take $S_\delta = (-\ln \delta)^{2/\alpha}$ and approximate $\mathcal P_{\alpha}^\delta(d,\mathcal K;S_\delta)$ by
$\widehat{\mathcal P_{\alpha}^\delta}(d,\mathcal K;S_\delta)$
obtained by Monte-Carlo simulations.
\item[2)] Theorem \ref{piterbarg_const_theo} and Theorem
\ref{lemma_piterbarg_truncation} imply that
$\mathcal P_{\alpha}(d,\mathcal K)
-\mathcal P_{\alpha}^\delta(d,\mathcal K;S_\delta) \le \mathcal C\delta^{\alpha/2}(-\ln\delta)^{1/2}$ uniformly for sufficiently small $\delta>0$,
hence we can approximate $\mathcal P_{\alpha}(d,\mathcal K)$ by
$\widehat{\mathcal P_{\alpha}^\delta}(d,\mathcal K;S_\delta)$
with an error term not exceeding
$\mathcal C\delta^{\alpha/2}(-\ln\delta)^{1/2}$ combined with statistical
error from estimation of
$\widehat{\mathcal P_{\alpha}^\delta}(d,\mathcal K;S_\delta)$.
\end{itemize}

Independence of the increments of Brownian motion allows us to
deduce more precise bounds for
$\mathcal P_{1}(d,\mathcal K)-\mathcal P_{1}^\delta(d,\mathcal K)$
than in Theorem \ref{piterbarg_const_theo}. Namely,
\begin{theorem}\label{pit_bm_theo}
For $d>0$ as $\delta \to 0$ it holds, that
\bqny{
\mathcal P_{1}(d,\mathcal K)-\mathcal P_{1}^\delta(d,\mathcal K)
\sim -\frac{\zeta(1/2)}{\sqrt\pi}
\sqrt\delta \cdot \mathcal P_{1}^\delta(d,\mathcal K),
}
where $\zeta$ is the Euler-Riemann zeta function and
$-\frac{\zeta(1/2)}{\sqrt\pi}>0$.
\end{theorem}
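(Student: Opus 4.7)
Set $Z(t) = \sqrt 2 B_1(t) - (1+d)|t|$, $M := \sup_{t\in\mathcal K} Z(t)$, and $M_\delta := \sup_{t\in\mathcal K\cap\delta\Z} Z(t)$, so that monotonicity of $\exp$ gives $\mathcal P_{1}(d,\mathcal K) = \E{e^M}$ and $\mathcal P_{1}^\delta(d,\mathcal K) = \E{e^{M_\delta}}$. The layer-cake identity $e^M - e^{M_\delta} = \int_{M_\delta}^M e^a\,da$ produces the representation
\bqny{
\mathcal P_{1}(d,\mathcal K) - \mathcal P_{1}^\delta(d,\mathcal K) = \int_\R e^a\, \PP(M_\delta \leq a < M)\, da,
}
which I analyse by conditioning on the grid values $y_k := Z(k\delta)$.

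The independence of Brownian increments makes the bridges of $Z$ on disjoint intervals $[k\delta,(k+1)\delta]$ conditionally independent (the linear drift cancels inside any bridge). The reflection principle, taking into account the variance $2t$ of $\sqrt 2 B_1$, gives
\bqny{
\PP\left(\sup_{t\in[k\delta,(k+1)\delta]} Z(t) \geq a \,\middle|\, y_k,y_{k+1}\right) = \exp\!\left(-\frac{(a-y_k)(a-y_{k+1})}{\delta}\right)
}
for $a\geq\max(y_k,y_{k+1})$, each exponentially small in $1/\delta$ unless $a - \max(y_k,y_{k+1}) = O(\sqrt\delta)$. Inclusion-exclusion combined with conditional independence then linearises the conditional probability to $\sum_k e^{-(a-y_k)(a-y_{k+1})/\delta}$ to leading order, the quadratic cross terms contributing $o(\sqrt\delta)$ after integration in $a$.

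Substituting $a = M_\delta + \sqrt\delta\, w$ and setting $\tilde\alpha_k = (M_\delta - y_k)/\sqrt\delta$, $\tilde\beta_k = (M_\delta - y_{k+1})/\sqrt\delta$ (both non-negative by definition of $M_\delta$), this rewrites as
\bqny{
\mathcal P_{1}(d,\mathcal K) - \mathcal P_{1}^\delta(d,\mathcal K) = \sqrt\delta\cdot \E{e^{M_\delta}\sum_k \int_0^\infty e^{\sqrt\delta\, w}\, e^{-(w+\tilde\alpha_k)(w+\tilde\beta_k)}\,dw} + o(\sqrt\delta).
}
The sum is concentrated on $k$ close to the discrete argmax $k^* := \arg\max_k y_k$. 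By the strong Markov property applied at the continuous argmax $\tau^*$, together with Williams' decomposition of Brownian motion at its maximum, the rescaled two-sided depth profile $\big((\tilde\alpha_{k^*+j},\tilde\beta_{k^*+j})\big)_{j\in\Z}$ converges jointly to the sampled values of a two-sided $\mathrm{BES}(3)$ landscape on a uniformly random translate of the unit lattice, and is asymptotically independent of $M_\delta \to M$. Taking the limit gives $\mathcal P_{1}(d,\mathcal K) - \mathcal P_{1}^\delta(d,\mathcal K) \sim \sqrt\delta\cdot \mathcal P_{1}^\delta(d,\mathcal K)\cdot C_\infty$, where $C_\infty = \lim_{\delta\to 0}\E{M - M_\delta}/\sqrt\delta$ is the universal normalised discretisation bias of the Brownian supremum.

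The identification $C_\infty = -\zeta(1/2)/\sqrt\pi$ follows from the classical Chernoff/Asmussen-Glynn-Pitman correction: for a standard Brownian motion the bias equals $-\zeta(1/2)/\sqrt{2\pi}\cdot\sqrt\delta$, and the factor $\sqrt 2$ in the coefficient of $B_1$ promotes it to the claimed constant. The main difficulty I anticipate is the uniform quantitative control of the three approximations above --- the inclusion-exclusion linearisation, the step $e^{\sqrt\delta w}\to 1$, and the joint convergence of the rescaled local profile with asymptotic independence of $M_\delta$ --- which I would handle via $L^2$ bounds on $e^{M_\delta}$ afforded by the explicit closed forms $\mathcal P_{1}(d,[0,\infty)) = 1 + 1/d$ and $\mathcal P_{1}(d,(-\infty,\infty)) = 1 + 2/d - 1/(2d+1)$ recalled in the introduction, combined with Theorems \ref{piterbarg_const_theo} and \ref{lemma_piterbarg_truncation} to truncate the time horizon uniformly in $\delta$.
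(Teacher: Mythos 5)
Your approach is genuinely different from the paper's. The paper establishes the result by (i) sandwiching $\mathcal P_1 - \mathcal P_1^\delta$ between $\E{(M - M_\delta)e^{M_\delta}}$ and $\E{(M - M_\delta)e^{M}}$ via the mean value theorem and then (ii) invoking the distributional convergence and asymptotic-independence results of Ivanovs and Bisewski--Ivanovs to show both bounds, divided by $\sqrt\delta$, converge to $-\zeta(1/2)/\sqrt\pi\cdot\mathcal P_1(d,\mathcal K)$, using H\"older plus de la Vall\'ee--Poussin for uniform integrability. You instead attempt a direct, self-contained derivation of the constant from the explicit Brownian bridge reflection formula. That is an attractive route and the overall structure is sound, but as written it has two concrete gaps.

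First, the inclusion--exclusion linearisation fails at leading order. With $a = M_\delta + \sqrt\delta\,w$ and $p_k(a) = e^{-(a-y_k)(a-y_{k+1})/\delta}$, the two bridges adjacent to the discrete argmax $k^*$ satisfy $p_{k^*-1}(a) = e^{-w(w+\tilde\alpha_{k^*-1})}$ and $p_{k^*}(a) = e^{-w(w+\tilde\beta_{k^*})}$, both $\Theta(1)$ for $w$ of order one, and both equal to $1$ at $w = 0$. Consequently $\sum_k p_k(a) - \bigl(1 - \prod_k(1 - p_k(a))\bigr)$ is $\Theta(1)$ (not $o(1)$) for $w$ bounded, so after multiplying by $\sqrt\delta$ and integrating in $w$ the discrepancy is $\Theta(\sqrt\delta)$, i.e.\ the same order as the quantity you are computing. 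Your displayed formula with ``$+\,o(\sqrt\delta)$'' is therefore incorrect; if you keep the exact $1 - \prod_k(1 - p_k)$ the argument can still be pushed through, but the linearised expression would give the wrong constant.

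Second, the proposed uniform-integrability argument ``via $L^2$ bounds on $e^{M_\delta}$'' does not cover the full range $d > 0$. Since $M$ is exponential with rate $1+d$, one has $\E{e^{2M}} < \infty$ only when $d > 1$; for $d \in (0,1]$ the second moment of $e^{M}$ is infinite, so $L^2$ control of $e^{M_\delta}$ is unavailable. Moreover the closed forms $\mathcal P_1(d,[0,\infty)) = 1 + 1/d$ and $\mathcal P_1(d,(-\infty,\infty)) = 1 + 2/d - 1/(2d+1)$ give only the first moment $\E{e^M}$, not $\E{e^{2M}}$, so they cannot by themselves ``afford'' an $L^2$ bound. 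The paper avoids this by a H\"older argument with exponents $q(1+\ep) < 1 + d$ (so $q$ close to $1$ and $\ep$ small), paired with moment bounds on $\Delta_\delta$ from \cite{bisewski2020zooming}; you would need the analogous refinement, together with a justification that the Williams/BES(3) local limit indeed yields the asymptotic independence of the rescaled depth profile from $M_\delta$ that your final factorisation requires.
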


It is interesting, that the constant $-\zeta(1/2)/\sqrt \pi$
appears above, the same constant plays the same role in the difference
between the classical Pickands constants, see \cite{pick_speed_convergence}.
This constant appears in many
problems concerning the difference between supremum of BM on a continuous
and discrete grids, see \cite{Dieker_survey_BM_sup}.

\section{Proofs}
Define for $d> 0$ and $\alpha\in (0,2)$
\bqn{\label{Z(t)}
Z_\alpha(t) = \sqrt 2 B_{\alpha}(t)-(d+1)|t|^{\alpha},
\qquad t \in\R .}
Before giving proofs we present and prove several
 lemmas needed for the proofs of
the main results.

\begin{lemma}\label{lemma_tail_behaviour_pick_const}
 For all $d>0$ there exists $C_1,C_2\in\R$ such that
\bqny{
\pk{\sup\limits_{t\in [0,\infty)}e^{Z_\alpha(t)}>x }
\sim C_1 (\ln x)^{C_2} x^{-1-d},\qquad x\to\infty.}
\end{lemma}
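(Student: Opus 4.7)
The plan is to set $u=\ln x$ and compute the tail asymptotics of $\PP\{\sup_{t\ge 0} Z_\alpha(t)>u\}$ as $u\to\infty$; showing this to be $C_1 u^{C_2} e^{-(d+1)u}(1+o(1))$ will directly yield the claim upon substituting $u=\ln x$.

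The first step is a self-similarity reduction. Applying $B_\alpha(\lambda s)\eqd\lambda^{\alpha/2}B_\alpha(s)$ with $\lambda^{\alpha/2}=(d+1)^{-1}$ gives
\bqny{
\PP\left\{\sup_{t\ge 0}\bigl(\sqrt 2 B_\alpha(t)-(d+1)t^\alpha\bigr)>u\right\}
=\PP\left\{\sup_{s\ge 0}\bigl(\sqrt 2 B_\alpha(s)-s^\alpha\bigr)>v\right\},
}
with $v:=(d+1)u$, so the parameter $d$ drops out of the process. I would then locate the unique point $s_*=s_*(v)=v^{1/\alpha}$ minimizing the standardized threshold $s\mapsto(v+s^\alpha)/(\sqrt 2\, s^{\alpha/2})$; the minimum value equals $\sqrt{2v}$, which fixes the dominant Gaussian factor as $\sim(4\pi v)^{-1/2}e^{-v}$ and explains the $x^{-1-d}$ decay after substituting $u=\ln x$.

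Next, I would localize the supremum around $s_*$: by Borell--TIS combined with a slicing argument for the unbounded domain, the contribution of $s\notin s_*[1-\eta(v),1+\eta(v)]$ is only $o(v^{C_2}e^{-v})$ for a suitably slowly decaying $\eta(v)\to 0$. On the localized interval, substituting $s=s_*(1+w)$ and invoking self-similarity a second time rewrites the event in terms of the $v$-independent Gaussian process $B_\alpha(1+w)/(1+w)^{\alpha/2}$, and a Taylor expansion shows the standardized level along $w$ takes the form $\sqrt{2v}\bigl(1+\tfrac{\alpha^2}{8}w^2+O(w^3)\bigr)$, i.e.\ a classical parabolic-trend picture. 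An application of Piterbarg's theorem on suprema of locally self-similar Gaussian processes with parabolic trend (e.g.\ Theorem 8.2 in \cite{20lectures}) then produces an asymptotic of the form $\pc\cdot C(\alpha)\cdot v^{\gamma}\cdot e^{-v}(1+o(1))$ for an explicit exponent $\gamma=\gamma(\alpha)$ arising from the effective volume of the quadratic-trend neighborhood and the local Pickands-type integration.

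Combining the factors and putting $v=(d+1)\ln x$ delivers the asymptotic claimed in the lemma with $C_2=\gamma-\tfrac12$ and a matching $C_1$; the sanity check $\alpha=1$, where $\sup_{t\ge 0}(\sqrt 2 B_1(t)-(d+1)t)$ is exponentially distributed with mean $(d+1)^{-1}$, forces $C_2=0$ and hence $\gamma=\tfrac12$. The main obstacle is the rigorous execution of the localization and the Piterbarg step while simultaneously tracking the non-stationary variance and the parabolic trend on an interval whose size depends on $v$; the self-similarity trick is essential here, as it converts the $v$-dependent non-stationary problem into a single fixed Gaussian model multiplied by explicit power factors of $v$, at which point the standard Pickands--Piterbarg machinery applies.
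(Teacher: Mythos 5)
Your approach is correct, but it re-derives the Pickands--Piterbarg asymptotics from scratch, whereas the paper's proof is a two-line application of a ready-made theorem after a slicker normalization. The paper sets $u=\ln x$ and, via the substitution $t\mapsto u^{1/\alpha}t$ together with self-similarity, rewrites
\bqny{
\pk{\sup\limits_{t\in[0,\infty)}e^{Z_\alpha(t)}>x}
=\pk{\sup\limits_{t\ge 0}V(t)>\sqrt{\ln x}},
\qquad V(t):=\frac{\sqrt 2\,B_\alpha(t)}{(d+1)t^\alpha+1}.
}
Here $V$ is a single fixed ($x$-independent) Gaussian process whose variance $2t^\alpha/((d+1)t^\alpha+1)^2$ attains a unique maximum $1/(2(d+1))$ at $t_0=(d+1)^{-1/\alpha}$, so the conclusion follows immediately from the ``unique maximum of variance on an unbounded domain'' result, \cite[Theorem~10.1]{20lectures}, evaluated at level $\sqrt{\ln x}$. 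You instead scale away only the constant $d$ (passing to $\sqrt 2 B_\alpha(\cdot)-|\cdot|^\alpha$ at the growing level $v=(d+1)\ln x$) and then essentially re-run the internal mechanics of that theorem: Borell--TIS localization around $s_*=v^{1/\alpha}$, a second self-similarity reduction, a Taylor expansion of the normalized level (your coefficient $\tfrac{\alpha^2}{8}$ is correct), and a parabolic-trend Piterbarg lemma. This route is valid and self-contained but considerably longer; the paper's extra step of dividing the threshold into the process is exactly what allows the result to be invoked as a black box. Your $\alpha=1$ sanity check (where $\sup_{t\ge0}Z_1$ is exponential, forcing $C_2=0$) is also correct and consistent with the lemma.
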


\begin{proof}[Proof of Lemma \ref{lemma_tail_behaviour_pick_const}]
By the self-similarity of fBM, for $x>1$ we have
\bqny{
\pk{\sup\limits_{t\in [0,\infty)}e^{Z_\alpha(t)}>x }
&=& \pk{\exists t \in [0,\infty): \sqrt 2 B_\alpha(t)-(d+1)t^{\alpha}>\ln x}
\\&=&
\pk{\exists t \in [0,\infty): \frac{\sqrt 2 B_\alpha(t)}{
(d+1)t^{\alpha}+1}>\sqrt {\ln x}}
=:
\pk{\exists t \in [0,\infty): V(t)>\sqrt {\ln x}}.
}
The variance of $V(t)$ for $t\ge 0$ achieves its unique maximum at
$t_0 = (\frac{1}{\sqrt{d+1}})^{2/\alpha}$ and
$\var\{V(t_0) \} = \frac{1}{2(d+1)},$
hence applying \cite[Theorem~10.1]{20lectures} we find that
there exist some $C_1>0$ and $C_2\in\R$ such that
$\pk{\sup_{t\in [0,\infty)}e^{Z_\alpha(t)}>x } \sim C_1(\ln x)^{C_2} x^{-1-d}$, as
$x\to\infty$.
\end{proof}

\begin{lemma}\label{new_lemma}
For any $p,T>0$ and $\alpha\in (0,2)$ for
sufficiently small $\delta>0$ with some $\mathcal C>0$
that does not depend on $\delta$ it holds, that
\bqny{
\E{\sup\limits_{t,s\in [0,T],|t-s|\le\delta}
|B_\alpha(t)-B_\alpha(s)|^p}^{1/p} \le \mathcal C \delta^{\alpha/2}\sqrt{\ln(|\delta/T|)}
.}
\end{lemma}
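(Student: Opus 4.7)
The plan is to partition $[0,T]$ into $N := \lceil T/\delta\rceil$ overlapping blocks of length $2\delta$, reduce the global modulus of continuity to the maximum of the oscillations on these blocks, and then use $\alpha/2$-self-similarity of $B_\alpha$ together with the Borell--TIS inequality to extract the required $\sqrt{\ln(T/\delta)}$ factor.

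First, set $I_k := [(k-1)\delta,(k+1)\delta]\cap [0,T]$ for $k=1,\ldots,N$. Every pair $t,s\in [0,T]$ with $|t-s|\le \delta$ lies in some common $I_k$, so letting $M_k := \sup_{t,s\in I_k}|B_\alpha(t)-B_\alpha(s)|$ one has
\[
\sup_{t,s\in [0,T],\,|t-s|\le \delta}|B_\alpha(t)-B_\alpha(s)|\ \le\ \max_{1\le k\le N}M_k.
\]
By stationarity of the increments of $B_\alpha$ and its $\alpha/2$-self-similarity, each $M_k \eqd (2\delta)^{\alpha/2}M$, where $M := \sup_{t,s\in [0,1]}|B_\alpha(t)-B_\alpha(s)|$. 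The Borell--TIS inequality, applied to the centered Gaussian field $\bigl(B_\alpha(t)-B_\alpha(s)\bigr)_{(t,s)\in [0,1]^2}$ whose pointwise variance is bounded by $1$, yields $\pk{M > \e M + u}\le e^{-u^2/2}$ for all $u\ge 0$, so $M$ has sub-Gaussian upper tails and finite moments of every order.

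Second, a union bound gives $\pk{\max_k M_k > (2\delta)^{\alpha/2}(\e M + u)}\le N\, e^{-u^2/2}$. Choosing the cut-off $u\asymp \sqrt{\ln N}$ and integrating the tail leads to
\[
\E{\bigl(\max_{1\le k\le N}M_k\bigr)^p}^{1/p}\ \le\ C_p\,(2\delta)^{\alpha/2}\sqrt{1+\ln N}.
\]
Since $N\le T/\delta + 1$, this is bounded by $\mathcal C\,\delta^{\alpha/2}\sqrt{\ln(T/\delta)}$ for all sufficiently small $\delta$, yielding the claim of the lemma.

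The main obstacle is the last tail-to-moment conversion: the threshold $u$ in the union bound must be tuned to scale like $\sqrt{\ln N}$ so that the resulting $L^p$ bound retains only the $(\ln N)^{p/2}$ factor rather than a larger power of the logarithm (a crude application of the union bound would give $(\ln N)^p$). The partitioning step is geometric bookkeeping, the self-similarity reduction is a one-line distributional identity, and Borell--TIS enters as a black-box input.
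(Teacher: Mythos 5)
Your proof is correct, but it takes a genuinely different route from the paper's. The paper proves the lemma in essentially one line: it first reduces from $[0,T]$ to $[0,1]$ by $\alpha/2$-self-similarity, and then invokes Theorem~4.2 of \cite{Minicourse}, which supplies a pathwise modulus-of-continuity bound $|B_\alpha(t)-B_\alpha(s)| \le \mathcal A\,|t-s|^{\alpha/2}\sqrt{|\ln|t-s||}$ with a random prefactor $\mathcal A$ having all moments finite; the deterministic factors are then pulled out of the expectation. You instead build the logarithmic factor from scratch: covering $[0,T]$ by $N\asymp T/\delta$ overlapping length-$2\delta$ blocks, reducing the $\delta$-modulus to a maximum of $N$ identically distributed (by stationary increments and self-similarity) block oscillations $M_k \eqd (2\delta)^{\alpha/2}M$, using Borell--TIS for the sub-Gaussian tail of $M$, and finally converting the resulting $N e^{-u^2/2}$ union tail into an $L^p$ bound by splitting the tail integral at $u\asymp\sqrt{\ln N}$. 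The trade-off is clear: your argument is self-contained and elementary (only Borell--TIS and basic fBM properties), at the cost of being longer and requiring the careful cutoff calibration you flag at the end; the paper's argument is shorter but delegates all the chaining work to an external modulus-of-continuity theorem. One cosmetic remark: in your step (2) the blocks near $T$ may be shorter than $2\delta$, so $M_k \eqd (2\delta)^{\alpha/2}M$ should read as stochastic domination $M_k \le_{\mathrm{st}} (2\delta)^{\alpha/2}M$ rather than distributional equality; this does not affect the union bound or the conclusion.
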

\begin{proof}[Proof of Lemma \ref{new_lemma}]
We have by Theorem 4.2 in \cite{Minicourse} with
some non-negative random variable $\mathcal A$ that does
not depend on $\delta$ and has all finite moments that
\bqny{
\E{\sup\limits_{t,s\in [0,T],|t-s|\le\delta}
|B_\alpha(t)-B_\alpha(s)|^p} &=&
T^{\frac{\alpha p}{2}}
\E{\sup\limits_{t,s\in [0,1],|t-s|\le\frac{\delta}{T}}|B_\alpha(t)-B_\alpha(s)|^p}
\\&\le&
T^{\frac{\alpha p}{2}}
\E{\sup\limits_{t,s\in [0,1],|t-s|\le\frac{\delta}{T}}
\left(\mathcal A |t-s|^{\alpha/2} \sqrt{|\ln(|t-s|/T)|}\right)^p}
\\&=& T^{\frac{\alpha p}{2}} (\frac{\delta}{T})^{\alpha p/2}
\sqrt{|\ln(\delta/T)|}^p \E{\mathcal A^p}
\\&=&  C\delta^{\alpha p/2} \sqrt{|\ln(\delta/T)|}^p,
}
and the claim follows.
\end{proof}

Now we are ready to prove the main results.

\begin{proof}[Proof of Theorem \ref{piterbarg_const_theo}]
We have by Theorem \ref{lemma_piterbarg_truncation}
with $T_\delta = (-\ln\delta)^{2/\alpha}$ that
\bqny{
\mathcal P_{\alpha}(d,\mathcal K) - \mathcal P_{\alpha}^\delta(d,\mathcal K)
&\le&
\mathcal P_{\alpha}(d,\mathcal K;T_\delta)-
\mathcal P_{\alpha}^\delta(d,\mathcal K;T_\delta)+o(\delta^{\alpha/2}).
\\&=& \E{\sup\limits_{t\in \mathcal K\cap [-T_\delta,T_\delta]} e^{
Z_\alpha(t)}
-\sup\limits_{t\in \mathcal K\cap [-T_\delta,T_\delta]_\delta} e^{
Z_\alpha(t)}}+o(\delta^{\alpha/2})
\\&\le& 2\E{
\sup\limits_{t\in [0,T_\delta]} e^{ Z_\alpha(t)}
-\sup\limits_{t\in [0,T_\delta]_\delta} e^{Z_\alpha(t)}
}+o(\delta^{\alpha/2}).}

Note that for any $ y\le x$ it holds, that $e^x-e^y\le (x-y)e^x$.
Implementing this inequality we find that, for $s,t\in[0,T_\delta]$,
\bqny{
\left|e^{Z_\alpha(t)}-e^{Z_\alpha(s)}\right|
&\le& e^{\max(Z_\alpha(t),Z_\alpha(s))}
|Z_\alpha(t)-Z_\alpha(s)|
\\&\le&
e^{\max\limits_{w\in [0,\IF)}Z_\alpha(w)}\left|
\sqrt 2(B_\alpha(t)-B_\alpha(s))-(t^\alpha-s^\alpha)\right|.
}
By the lines above we obtain
\bqny{& \ &
\E{\sup\limits_{t\in [0,T_\delta]} e^{ Z_\alpha(t)}
-\sup\limits_{t\in [0,T_\delta]_\delta} e^{Z_\alpha(t)}}
\\&\le&
\E{\sup\limits_{t,s\in [0,T_\delta], |t-s|\le \delta}
| e^{ Z_\alpha(t)}- e^{Z_\alpha(t)}|}
\\&\le&
\E{\sup\limits_{t,s\in [0,T_\delta], |t-s|\le \delta}
e^{\max\limits_{w\in [0,\IF)}Z_\alpha(w)}\left|
\sqrt 2(B_\alpha(t)-B_\alpha(s))-(t^\alpha-s^\alpha)\right|}
\\&\le&
\sqrt 2\E{\sup\limits_{t,s\in [0,T_\delta], |t-s|\le \delta}
e^{\max\limits_{w\in [0,\IF)}Z_\alpha(w)}\left|
B_\alpha(t)-B_\alpha(s)\right|}
+\E{e^{\max\limits_{w\in [0,\IF)}Z_\alpha(w)}}
\sup\limits_{t,s\in [0,T_\delta], |t-s|\le \delta}
\left|t^\alpha-s^\alpha\right|
\\&\le&
\sqrt 2
\E{e^{q\max\limits_{w\in [0,\IF)}Z_\alpha(w)}}^{1/q}
\E{\sup\limits_{t,s\in [0,T_\delta], |t-s|\le \delta}
\left|B_\alpha(t)-B_\alpha(s)\right|^p}^{1/p}+o(\delta^{\alpha/2}),
}
where $1/q+1/p = 1$ and $q$ is chosen such that
$\E{e^{q\max\limits_{w\in [0,\IF)}Z_\alpha(w)}}<\IF$, this is possible to
do by Theorem \ref{lemma_tail_behaviour_pick_const}.
Next by Lemma \ref{new_lemma} and lines above we have
\bqny{
\mathcal P_{\alpha}(d,\mathcal K) - \mathcal P_{\alpha}^\delta(d,\mathcal K)
\le
\mathcal C \delta^{\alpha/2}\sqrt{\ln(|\delta/T_\delta|)}+o(\delta^{\alpha/2})
}
and the claim follows.
\end{proof}

\begin{proof}[Proof of Theorem \ref{lemma_piterbarg_truncation}]
We have
\bqny{
\pad-\mathcal P_\alpha^\delta(d,\mathcal K;T) =
\E{\sup\limits_{t\in \mathcal K \cap(-\infty,\infty)_\delta}e^{Z_\alpha(t)}
-\sup\limits_{t\in \mathcal K \cap[-T,T]_\delta}e^{Z_\alpha(t)}}
\le \E{\sup\limits_{t\in\R}e^{Z_\alpha(t)}\ind(M_T) },
}
where $M_T = \{\omega\in \Omega :
Z_\alpha(\omega,t)>0 \text{ for some } t\in (-\infty,-T]
\cap[T,\infty)\}$ with $\Omega$ being the general probability space.
We have by Borell-TIS inequality and the symmetry
\bqny{
\pk{M_T} \le 2\pk{\exists
t\ge T: \sqrt 2 B_\alpha(t)-t^{\alpha}>0}
= 2\pk{\exists t\ge T :
\frac{\sqrt 2 B_\alpha(t)}{t^{\alpha/2}}>t^{\alpha/2}}
\le e^{-CT^{\alpha}}
.}
By Lemma \ref{lemma_tail_behaviour_pick_const}
$\sup\limits_{t\in\R}e^{Z_\alpha(t)}$ has the finite moment of
order $1+d/2$ and hence applying H\"{o}lder inequality we have
\bqny{
\E{\sup\limits_{t\in\R}e^{Z_\alpha(t)}\ind(M_T) } \le
C_1 \pk{M_T}^{C_2}
\le
e^{-CT^{\alpha}}
}
and the claim follows.\end{proof}

\begin{proof}[Proof of Theorem \ref{pit_bm_theo}]
Let $Z_\alpha(t) = \sqrt{2}B_\alpha(t) - (1+a)|t|^\alpha$ be
defined as in \eqref{Z(t)}. We will
show the statement only for $\mathcal K = (-\IF,\IF)$;
the proof for $\mathcal K = [0,\IF)$
will be analogous (but slightly simpler). Fix $a>0$ for $J\subseteq \R$, we define
\[M^J = \sup_{t\in J} Z_\alpha(t), \quad M^J_\delta = \sup_{t\in J\cap \delta\Z} Z_\alpha(t), \quad \Delta^J_\delta := \frac{M^J-M^J_\delta}{\delta^{1/2}}.\]
For breviety we write $M := M^\R, M_\delta := M^\R_\delta$, and $\Delta_\delta := \Delta^\R_\delta$. Then,
\begin{equation}
\mathcal P_{\alpha}(d,\mathcal K) -
\mathcal P_{\alpha}^\delta(d,\mathcal K) = \E{e^M - e^{M_\delta}}.
\end{equation}
Using mean value theorem and noticing that $M_\delta \leq M$ a.s., we obtain the following bounds
\begin{equation}\label{ineq_piterbarg_diff}
\E{(M-M_\delta) \cdot
	e^{M_\delta}  } \leq
\mathcal P_{\alpha}(d,\mathcal K) -\mathcal P_{\alpha}^\delta(d,\mathcal K)
\leq \E{(M-M_\delta) \cdot
		e^{M}  }.
\end{equation}
Till the end, we want to show that
\begin{equation}\label{eqtoshowpickandslevy}
\lim_{\delta\to0}\E{\Delta_\delta \cdot e^{M_\delta}} =
\lim_{\delta\to0}\E{\Delta_\delta \cdot e^{M}} =
\lim_{\delta\to0} \E{\Delta_\delta} \cdot
\mathcal P_{\alpha}(d,\mathcal K)
 = -\frac{\zeta(1/2)}{\sqrt{\pi}} \cdot \mathcal P_{\alpha}(d,\mathcal K) ,
\end{equation}
which, in combination with \eqref{ineq_piterbarg_diff}, will conclude the proof. We will show \eqref{eqtoshowpickandslevy} in three steps.

{\bf Step 1.} Show that there exists a random variable $D$ such that $\E{D} = -\zeta\big(\tfrac{\alpha-1}{\alpha}\big)\E{\widehat X_1^+}$ and such that for any continuity set $A$ of $D$ and an arbitrary event $B\in \mathcal F$ of positive probability, where $\mathcal F$ is the borel $\sigma$-algebra, under which $Z$ is adapted to the usual fitration, for $J=\R$, as $\delta\downarrow0$
\begin{equation}\label{eqrenyiJ}
\pk{\Delta_\delta^J \in A, B} \to  \pk{D\in A}\pk{B}.
\end{equation}
The statement \eqref{eqrenyiJ} for $J=\R$ implies that $\Delta_\delta\convd D$ and, intuitively, that the sequence is `asymptotically independent' of the process $\{Z_\alpha(t)\}_{t\in\R}$.

Now, \cite[Thm.~4]{ivanovs2018zooming} implies that \eqref{eqrenyiJ} holds for $J=[0,T)$ as well as $J=(-T,0]$ for arbitrary $T\in(0,\infty)$.

Now, let $\tau$ and $\tau_\delta$ be the (almost surely unique) time epochs of the all-time supremum of $Z$ over $\R$ and over $\delta\mathbb \Z$ respectively, i.e. $Z_\alpha(\tau) = M$ and $Z_\alpha(\tau_\delta) = M_\delta$. For any $T>0$, let $C_T^+$ be the event that $\{\tau\in[0,T), \tau_\delta\in[0,T)\}$ and similarly, let $C_T^-:= \{\tau\in(-T,0], \tau_\delta\in(-T,0]\}$. We have
\begin{align*}
\pk{\Delta_\delta \in A,  B} = \pk{\Delta_\delta \in A,  B, C_T^-} + \pk{\Delta_\delta \in A,  B, C_T^+} + \pk{\Delta_\delta \in A,  B, (C_T^- \cup C_T^+)'}\\
= \pk{\Delta^{[0,T)}_\delta \in A,  B, C_T^+}  + \pk{\Delta_\delta^{(-T,0]} \in A,  B,  C_T^-} + \pk{\Delta_\delta \in A,  B, (C_T^- \cup C_T^+)'},
\end{align*}
which implies that
\begin{align*}
\Big|\pk{\Delta_\delta \in A,  B} - \big(\pk{\Delta^{[0,T)}_\delta \in A,  B, C_T^+} + \pk{\Delta^{(-T,0]}_\delta \in A,  B, C_T^-}\big) \Big| \leq \pk{(C_T^- \cup C_T^+)'}.
\end{align*}
We now pass $\delta\to0$, use \eqref{eqrenyiJ} with $B:=B\cap C_T^+$ and $B:=B\cap C_T^-$, and obtain
\begin{align*}
\Big|\pk{\Delta_\delta \in A,  B} - \pk{D\in A}\big(\pk{B, C_T^+} & +\pk{B, C_T^-}\big)\Big| \leq \pk{(C_T^- \cup C_T^+)'}.
\end{align*}

We pass $T\to\infty$ and notice that $C_T^+ \cap C_T^- = \emptyset$ and $\pk{C_T^+\cup C_T^-} \to 1$, as $T\to\infty$, which shows \eqref{eqrenyiJ} for $J=\R$. Finally, we note that $\E{D} = -\frac{\zeta(1/2)}{\sqrt{\pi}}$, which is established in \cite[Corollary~4]{bisewski2020zooming}.

{\bf Step 2.} Show that the family of random variables $\Delta_\delta \cdot e^{M_\delta}$, indexed by $\delta$, is uniformly integrable for $\delta$ small enough.

According to de la Vall{\'e}e-Poussin Theorem, it is enough to establish that there exists $\ep >0$ such that the sequence $\E{(\Delta_\delta \cdot e^{M_\delta})^{1+\ep}}$ is bounded for all $\delta$ small enough. Due to H{\"o}lder's inequality, with $p,q \in [1, \infty)$ satisfying $1/p+1/q=1$ we obtain
\begin{equation}\label{eqHolderUI}
\E{(\Delta_\delta \cdot e^{M_\delta})^{1+\varepsilon}} \leq \left(\E{\Delta_\delta^{p(1+\varepsilon)}}\right)^{1/p} \cdot \left(\E{e^{q(1+\ep)M_\delta}}\right)^{1/q}.
\end{equation}
We now establish that for any $0<p<\alpha$, the $p$th moment of $\Delta_\delta$ is bounded for all $\delta$ small enough. Since $\Delta_\delta \leq \Delta_\delta^{[0, \infty)} + \Delta_\delta^{(-\infty, 0]}$, then for any $p>0$ we have
\begin{align*}
\E{\Delta_\delta}^p & \leq \max\{1, 2^{p-1}\} \cdot \left( \E{\Delta^{[0,\infty)}_\delta}^p + \E{\Delta^{(-\infty, 0]}_\delta}^p \right),
\end{align*}
where we used the fact that $(x+y)^p \leq \max\{1, 2^{p-1}\} (x^p + y^p)$ for all $x,y>0$. The boundedness of $\E{\Delta_\delta}^p$ readily follows from the boundedness of $\E{\Delta^{[0,\infty)}_\delta}^p$ and $\E{\Delta^{(-\infty, 0]}_\delta}^p$, which are established in \cite[Theorem~2]{bisewski2020zooming}. Moreover, since $M$ is exponentially distributed with mean $(1+a)^{-1}$, then $\E{e^{\alpha M}}< \infty$ for any $\alpha < 1+a$. So, in \eqref{eqHolderUI}, if we take $q>1$ and $\ep>0$ small enough such that $q(1+\ep) < 1+a$, then $\E{e^{q(1+\ep)M_\delta}} \leq \E{e^{q(1+\ep)M}}$ will be bounded, as $\delta\to 0$.

{\bf Step 3.} Since $e^{M_\delta} \to e^M$ a.s., and the limit is $\mathcal F$-measurable, then due to the mixing condition in \eqref{eqrenyiJ}, the pair $(\Delta_\delta, e^{M_\delta})$ converges jointly in distribution to $(D, e^M)$, where $D$ is independent of $e^M$. Now, from the continuous mapping theorem we find that $\Delta_\delta \cdot e^{M_\delta} \convd D\cdot e^M$. Finally, \eqref{eqtoshowpickandslevy} follows from the uniform integrability of the family $\Delta_\delta\cdot e^{M_\delta}$.
\end{proof}

\section*{Acknowledgements}
We would like to thank Prof. Enkelejd Hashorva and Prof.
Krzysztof D\c{e}bicki for fruitful discussions. Krzysztof Bisewski's research was funded by SNSF Grant 200021-196888. 
G. Jasnovidov was supported by the Ministry of Science 
and Higher Education of the Russian
Federation, agreement 075-15-2019-1620 
date 08/11/2019 and 075-15-2022-289 date 06/04/2022.

\bibliography{bibliography_speed_of_convergence}{}
\bibliographystyle{apalike}

\end{document}